\newcommand{\makered}[1]{}
\newlength{\rulebreite}
\def\timesover#1#2#3{\ \xymatrix@1@=0pt@M=0pt{ _{#1}&\times&_{#2} \\& ^{#3}&}\ }
\def\otimesover#1#2#3{\ \xymatrix@1@=0pt@M=0pt{ _{#1}&\otimes&_{#2} \\& ^{#3}&}\
}
\theoremstyle{plain}
\newtheorem{thm}{Theorem}
\newtheorem{lemma}[thm]{Lemma}
\newtheorem{cor}[thm]{Corollary}
\theoremstyle{definition}
\newtheorem{defn}[thm]{Definition}
\newtheorem{rmk}[thm]{Remark}
\numberwithin{thm}{section}
\numberwithin{equation}{section}
\newcommand{\ga}[2]{\begin{gather}\label{#1}#2 \end{gather}}
\newcommand{\sE}{{\mathcal E}}
\newcommand{\sN}{{\mathcal N}}
\newcommand{\sO}{{\mathcal O}}
\newcommand{\sS}{{\mathcal S}}
\newcommand{\F}{{\mathbb F}}
\newcommand{\M}{{\mathbb M}}
\newcommand{\et}{{\acute{e}t}}
\def\tilde{\widetilde}
\begin{document}

\title[Stratifications in characteristic $p>0$]
{A relative version of Gieseker's problem on stratifications in 
characteristic $p>0$}

\author{H\'el\`ene Esnault}
\address{
Freie Universit\"at Berlin, Arnimallee 3, 14195, Berlin,  Germany}
\email{esnault@math.fu-berlin.de}
\author{Vasudevan Srinivas}
\address{School of Mathematics, Tata Institute of Fundamental Research, Homi
Bhabha Road, Colaba, Mumbai-400005, India} 
\email{srinivas@math.tifr.res.in}
\date{\today}
\thanks{The first  author is supported by  the Einstein program; the second 
author is supported by an Einstein Visiting 
Fellowship, and by a J.C. Bose Fellowship.}
\begin{abstract} 
We prove that the vanishing of the functoriality morphism for the \'etale 
fundamental group between smooth projective varieties over an algebraically 
closed field of characteristic $p>0$ forces the same property for the 
fundamental groups of stratifications. 

\end{abstract}
\maketitle
\section{Introduction}

In analogy with complex geometry, where according to Mal\v{c}ev-Grothendieck's 
theorem (\cite{Mal40}, \cite{Gro70}), a simply connected algebraic complex 
manifold has no non-trivial regular singular connections, Gieseker  conjectured 
in \cite[p.~8]{Gie75} that simply connected smooth projective varieties defined 
over an algebraically closed characteristic $p>0$ field have no non-trivial 
stratified bundles. 
This has been solved in the positive in \cite[Thm.~1.1]{EM10}.  Gieseker in {\it 
loc.cit.} also conjectured 
that if the commutator of the \'etale fundamental group is a pro-$p$-group, 
irreducible stratified bundles have rank $1$, and if  the \'etale fundamental 
group is abelian with no non-trivial $p$-power quotient, then the category of 
stratified bundles is semi-simple with irreducible objects of rank $1$. This is 
proven in \cite[Thm.~3.9]{ES14}.
In \cite[Thm.~3.2]{ES16} we extended \cite[Thm.~1.1]{EM10} to the regular locus 
of a normal variety, assuming in addition that the ground field is $\bar \F_p$. 

In this article we prove a relative version. 
\begin{thm}\label{Mainthm}
Let $\pi:Y\to X$ be a morphism between smooth projective varieties over an 
algebraically closed 
field $k$ of characteristic $p>0$, such that for any finite \'etale cover $Z\to 
X$, the pull-back cover $Y\times_XZ\to Y$ splits completely. 
Then for any stratified vector bundle
$\M$ 
on $X$, the pull-back stratified bundle $\pi^*\M$ on $Y$ is trivial.
\end{thm}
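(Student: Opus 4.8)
\medskip
\noindent\textbf{Sketch of a proof.}

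The first step is to translate the hypothesis on $\pi$ into a statement about fundamental groups. For a connected finite \'etale cover $Z\to X$ corresponding to an open subgroup $H\subseteq\pi_1^{\et}(X,\pi(y))$, the cover $Y\times_XZ\to Y$ splits completely exactly when $\pi_1^{\et}(Y,y)$ acts trivially on the coset space $\pi_1^{\et}(X,\pi(y))/H$; imposing this for all $Z$ forces the image of the homomorphism $\pi_1^{\et}(Y,y)\to\pi_1^{\et}(X,\pi(y))$ induced by $\pi$ into $\bigcap_H H=\{1\}$. Hence the hypothesis is equivalent to the triviality of that homomorphism, and equivalently to: for \emph{every} finite \'etale cover $h\colon X'\to X$, the pulled-back cover $Y\times_XX'\to Y$ is trivial, so admits a section, i.e.\ $\pi$ lifts to a morphism $\tilde\pi\colon Y\to X'$ with $h\circ\tilde\pi=\pi$.

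The second step isolates the one substantial input, which involves only $X$ and $\M$:
\[
(\ast)\qquad\text{on a smooth projective variety over }\bar k\text{, every stratified bundle is trivialized by a finite \'etale cover.}
\]
Granting $(\ast)$, the theorem is immediate: given $\M$ on $X$, choose $h\colon X'\to X$ finite \'etale with $h^*\M\cong\sO_{X'}^{\oplus r}$ as a stratified bundle, choose a lift $\tilde\pi\colon Y\to X'$ of $\pi$ as above, and then $\pi^*\M=\tilde\pi^*h^*\M\cong\tilde\pi^*\sO_{X'}^{\oplus r}\cong\sO_Y^{\oplus r}$.

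It remains to prove $(\ast)$, which says that the stratified fundamental group scheme of a smooth projective variety over $\bar k$ coincides with its \'etale fundamental group; this is the non-simply-connected enhancement of \cite[Thm.~1.1]{EM10} and I would prove it by the method of that paper. One reduces to $\bar k=\bar\F_p$ by spreading out and specializing. Writing $\M$ as a Frobenius-divided sheaf $(E_i,\sigma_i)_{i\ge0}$ with $\sigma_i\colon F^*E_{i+1}\xrightarrow{\ \sim\ }E_i$, the identity $c_1(E_i)=p\,c_1(E_{i+1})$ makes $c_1(E_0)$ divisible by every power of $p$ in $\mathrm{NS}(X)$, hence torsion; a closer analysis of the Harder--Narasimhan filtrations of the $E_i$ (via Frobenius-pullback slope bounds and Langer's boundedness theorem for semistable sheaves) shows that, discarding finitely many indices, all the $E_i$ lie in one bounded family defined over a finite field. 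Such a family --- and hence $\M$, which is itself defined over a single finite field --- has only finitely many isomorphism classes of objects over that field, so $E_i\cong E_j$ for some $i<j$; as $E_i\cong(F^*)^{j-i}E_j$, this yields $(F^*)^{j-i}E_i\cong E_i$, a Frobenius structure, which by Lange--Stuhler trivializes $E_i$, and with it $\M$, over a finite \'etale cover. A non-semisimple $\M$ is reduced to this case: trivialize the Jordan--H\"older constituents of $\M$ over a common finite \'etale cover, over which the pullback becomes an iterated self-extension of $\sO$, i.e.\ unipotent; the underlying bundles of a unipotent stratified bundle are unipotent of bounded length, hence again form a bounded family over a finite field, and the same counting argument applies.

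The main obstacle lies entirely in $(\ast)$, since the relative step of the first two paragraphs is soft: nothing is used on the $Y$-side beyond the lifting of $\pi$ through finite \'etale covers of $X$ --- in particular $\pi$ need not be surjective, nor $\pi(Y)$ smooth. The delicate points, all internal to the method of \cite{EM10}, are: the Harder--Narasimhan control that forces the Frobenius-divided bundles into a bounded family; the reduction to $\bar\F_p$ while carrying along the stratification (an a priori infinite datum, though governed by the ring $\mathcal D^{(\infty)}_X$, which is defined over $\mathbb F_p$); and the treatment of the unipotent, non-semisimple case.
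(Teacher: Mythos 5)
Your reduction of the theorem to the single statement $(\ast)$ is exactly where the proof breaks down: $(\ast)$ is false, even over $\bar\F_p$. The Tannaka group $\pi_1^{\rm strat}(X)$ is in general strictly larger than $\pi_1^{\et}(X)$, and not every stratified bundle is trivialized by a finite \'etale cover. Concretely, let $X=A$ be an ordinary abelian variety. Since $F_A^*L\cong L^{\otimes p}$ for any line bundle $L$, a rank one stratified bundle is a sequence $(L_i)$ in $\Pic^0(A)$ with $L_{i+1}^{\otimes p}\cong L_i$, i.e.\ an element of $\varprojlim_{[p]}\Pic^0(A)(k)$. Over $k=\overline{\F_p(t)}$ one may take $L_0$ non-torsion (and infinitely $p$-divisible, as $[p]$ is surjective on $k$-points of $\Pic^0$); over $k=\bar\F_p$ one may take $L_0=\sO_A$ and $L_1$ a nontrivial element of the nonzero group $\Pic^0(A)[p](\bar\F_p)$, so that $L_i$ has order $p^{\,i}$ for $i\geq 1$. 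In either case no finite \'etale cover of $A$ (an isogeny, whose effect on $\Pic^0$ is to kill only a fixed finite subgroup) can trivialize all the $L_i$, so this stratified line bundle is not \'etale trivializable. This is consistent with Gieseker's conjectures quoted in the introduction, which predict nontrivial rank one stratified bundles precisely in such situations. Consequently the theorem cannot be proved by trivializing $\M$ on a finite \'etale cover of $X$ and lifting $\pi$: the whole content is that vanishing of $\pi_1^{\et}(Y)\to\pi_1^{\et}(X)$ forces vanishing of $\pi_1^{\rm strat}(Y)\to\pi_1^{\rm strat}(X)$, and this does not factor through the \'etale quotient. (Your first paragraph, translating the hypothesis into the vanishing of $\pi_1^{\et}(Y)\to\pi_1^{\et}(X)$ and the lifting property, is correct and agrees with the remark after the theorem.)

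What the paper actually uses is much weaker than $(\ast)$: only that the \'etale-trivializable bundles are Zariski \emph{dense} in the closure $\sN$ of the set of moduli points of the underlying bundles $E_m$. This density is what Hrushovski's theorem on periodic points of the dominant rational self-map $[E]\mapsto[F_X^*E]$ delivers after spreading out over a finite field, and it is then combined with the key semicontinuity statement (Lemma~\ref{semicontinuity}): among semistable degree-zero bundles on the curve $C$, triviality of $\pi^*E$ is a closed condition, so it propagates from the dense set of \'etale-trivializable points to all of $\sN$, in particular to every $E_m$. The non-semisimple case then requires the separate argument of Section~\ref{sec:gen}, since extensions are invisible to the moduli space. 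Note finally that even in the simply connected setting your sketch of $(\ast)$ misremembers \cite{EM10}: the $E_i$ lie in a bounded family, but they need not all be defined over one fixed finite field (the field of definition may grow with $i$), so the ``only finitely many isomorphism classes'' count is unavailable; Hrushovski's density theorem is precisely what replaces that count.
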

We can of course translate the theorem using the fundamental groups: if  the 
homomorphism
$f_*:\pi_1^{{\rm \acute{e}t}}(Y)\to \pi_1^{{\rm \et}}(X)$ on the \'etale 
fundamental groups  is the zero map, then so is the homomorphism 
$f_*: \pi_1^{\rm strat}(Y)\to \pi_1^{\rm strat}(X)$ 
on the $k$-Tannaka group-schemes  of stratified bundles (we omit all the base 
points as this is irrelevant here). 

As in the other proofs in {\it loc.cit.}, the key tool is the theorem of 
Hrushovski  \cite[Cor.~1.2]{Hru04} on the density of preperiodic points by a 
correspondence over $\bar \F_p$. A complete proof of  it in the framework of 
arithmetic geometry is due to Varshavsky \cite[Thm.~0.1]{Var14}. However, with 
this tool in the back ground, one needs a slightly different strategy as 
compared to the other proofs in {\it loc. cit.}. Indeed,  one can not argue 
directly on the moduli of objects the triviality of which one wants to prove.  
One important point here is that the triviality of a stratified bundle is 
recognized by the dimension of its space of stratified sections. This enables 
one to argue by semi-continuity arguments (see Lemma~\ref{semicontinuity}). 
Another point is that in the non-irreducible case, one has to deal with 
extensions which are not recognized on the moduli space. This also requires a 
new argument. See Section~\ref{sec:gen}.

\medskip

{\it Acknowledgements:}   The theorem for irreducible  stratified bundles was proved by Xiaotao Sun
on 2013  in a unpublished note. He told us on January of 2017 that he and his student  knew
how to prove  the general case based on an application of Simpson's theorem on moduli of framed bundles, which so far exists in
characteristic $0$ only.  We hope very much that he and his student
shall soon write  a characteristic $p > 0$ version of Simpson's theorem and that this way one shall have a 
proof  different from the one presented here. We thank him warmly for communicating his idea to us.

\section{Some properties of  vector bundles and their moduli}
We use the standard notations: if $f: Y\to X$ and $X'\to X$ are any morphisms, 
one denotes by $Y_{X'}\to X'$ the base change $Y\times_X X'\to X$.   If $Y$ has 
characteristic $p>0$, one denotes by $F_X:  X\to X$ the absolute Frobenius, by  
$f': Y'=Y\times_{X, F_X} X\to X$ the pull-back of $f$ by $F_X$, by $\xymatrix{  
\ar[dr]_f Y\ar[r]^{F_{Y/X}} & Y' \ar[d]^{f'} \\
 & X }$ the relative Frobenius above $X$.

 If $E$ is a vector bundle over $Y$, one denotes by $E_{X'}$ the pull-back 
vector bundle over $Y_{X'}$ via $Y_{X'}\to Y$. 

Recall that, from the theory of the Harder-Narasimhan filtration, any vector 
bundle $E$ on a smooth projective curve over an algebraically closed field has 
a unique maximal subbundle $F\subset E$ of maximal slope, and the slope of this 
subbundle $F$ is denoted by $\mu_{\rm max}(E)$. In particular, if $E$ has degree $0$, 
then $\mu_{\rm max}{\sE}\geq 0$, with equality precisely when $E$ is $\mu$-semistable.

\begin{lemma}\label{semicontinuity}   Let $C\to T$ be a smooth projective 
morphism, where $T$ is a Noetherian scheme and where the fibers  are  
geometrically connected smooth projective curves. Let $E$ 
be a vector bundle of rank $r$ on $C$. Then the following holds. 
\begin{enumerate}
\item[(i)] There is a constructible subset $T_0\subset T$ so that for a 
 point $t$ of 
$T$, it holds that
\[\mbox{$ H^0(C_t, E_t)\otimes_{k(t)} \sO_{C_t} \xrightarrow{\cong} E_t$ is a 
trivial bundle $\Leftrightarrow$ $t$ is a point of 
$T_0$.}\]
\item[(ii)] If moreover for every geometric point $t\in T$,  $E_t$ is a 
semistable vector bundle of degree 0, then $T_0$ is 
closed.  In particular, if $T_0$ contains a dense set of closed points of $T$, 
then $T_0=T$.
\item[(iii)] As $t$ ranges over the   points of $T$, the set of numbers 
$\mu_{\rm max}(E_t)$ is bounded.

\end{enumerate}
\end{lemma}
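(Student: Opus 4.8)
\emph{Plan of proof.}

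For part~(i), write $f\colon C\to T$ and stratify $T$ by the upper--semicontinuous function $t\mapsto\dim H^0(C_t,E_t)$ into finitely many locally closed subschemes $T_i$ on which this function is a constant $m_i$; by Grauert's theorem (cohomology and base change) the direct image $\sF_i:=(f_i)_*(E|_{C_{T_i}})$ is then locally free of rank $m_i$ and its formation commutes with base change, so $\sF_i\otimes k(t)=H^0(C_t,E_t)$ for $t\in T_i$. If $m_i\ne r$, no fibre $E_t$ over $T_i$ can be trivial. If $m_i=r$, the adjunction morphism $\alpha_i\colon(f_i)^*\sF_i\to E|_{C_{T_i}}$ is a map of rank-$r$ vector bundles whose restriction to a fibre $C_t$ is the evaluation map $H^0(C_t,E_t)\otimes_{k(t)}\sO_{C_t}\to E_t$; hence $E_t$ is trivial iff $\alpha_i$ is an isomorphism along $C_t$, i.e.\ iff $C_t\cap\mathrm{Supp}(\mathrm{coker}\,\alpha_i)=\emptyset$, and since $\mathrm{coker}\,\alpha_i$ is supported on a closed subset of $C_{T_i}$ and $f_i$ is proper, the set of such $t$ is open in $T_i$. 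The finite union of these open sets over the strata with $m_i=r$ is the desired constructible set $T_0$.

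For part~(ii), the extra input is the elementary fact that a semistable vector bundle $V$ of degree $0$ and rank $s$ on a geometrically connected smooth projective curve $C'$ over a field has $\dim H^0(C',V)\le s$, with equality precisely when $V$ is trivial. I would prove this by induction on $s$: a nonzero global section gives an injection $\sO_{C'}\hookrightarrow V$ whose saturation is a line subbundle $L$; then $\deg L\ge0$ (it has a nonzero section) while $\deg L\le0$ (semistability), so $L\cong\sO_{C'}$ and $\sO_{C'}\hookrightarrow V$ is already a subbundle inclusion, with quotient $V'$ semistable of degree $0$ and rank $s-1$ (pull back destabilizing subsheaves). The sequence $0\to\sO_{C'}\to V\to V'\to0$ gives $\dim H^0(C',V)=1+\dim\ker\bigl(\delta\colon H^0(C',V')\to H^1(C',\sO_{C'})\bigr)\le 1+(s-1)$; equality forces $\dim H^0(C',V')=s-1$ — hence $V'$ trivial by induction — and $\delta=0$, and since $\delta$ is composition with the extension class the sequence splits, so $V$ is trivial. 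Granting this, the hypothesis of~(ii) gives, for every point $t$ of $T$, that $E_t$ is trivial iff $\dim H^0(C_t,E_t)\ge r$; as $t\mapsto\dim H^0(C_t,E_t)$ is upper semicontinuous (flatness of $E$ over $T$, properness of $f$), the set $T_0=\{\,t\in T:\dim H^0(C_t,E_t)\ge r\,\}$ is closed. A closed subset of $T$ containing a dense set of closed points is all of $T$, which is the last assertion.

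For part~(iii): $\chi(C_t,E_t)$ and $\chi(C_t,\sO_{C_t})$ are locally constant on the Noetherian scheme $T$, so by Riemann--Roch the genus $g_t$ and the degree $\deg E_t$ take only finitely many values; set $G=\max_t g_t$ and $d=\min_t\deg E_t$. Upper semicontinuity of $t\mapsto\dim H^0(C_t,E_t)$ on the Noetherian space $T$ forces it to be bounded, say by $N$: the closed sets $\{\,t:\dim H^0(C_t,E_t)\ge n\,\}$ form a descending chain which stabilizes, and its stable value must be empty since $\dim H^0$ is finite pointwise. Now let $F\subset E_t$ be the maximal destabilizing subbundle, of rank $s$ and degree $e$, so $\mu_{\rm max}(E_t)=e/s$. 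By Riemann--Roch on $C_t$, $e=\chi(C_t,F)+s(g_t-1)\le\dim H^0(C_t,F)+sG\le\dim H^0(C_t,E_t)+rG\le N+rG$, hence $\mu_{\rm max}(E_t)=e/s\le\max(e,0)\le N+rG$; combined with $\mu_{\rm max}(E_t)\ge(\deg E_t)/r\ge d/r$, the set $\{\mu_{\rm max}(E_t):t\in T\}$ is bounded.

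The main difficulty is the bookkeeping in~(i): one must set up the stratification so that the direct image genuinely commutes with base change (so that the fibre of $\alpha_i$ over $t$ really is the evaluation map), and then check that ``$\alpha_i$ is an isomorphism'' is an open condition whose failure locus descends, via the proper $f_i$, to a closed subset of $T_i$. In~(ii), the estimate $\dim H^0\le\rank$ for semistable degree-$0$ bundles together with its equality case — standard, but the conceptual core — is precisely what turns the a priori only constructible $T_0$ into a closed set.
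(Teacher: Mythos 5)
Your proposal is correct, and for parts (i) and (ii) it follows the same route as the paper, which simply invokes the semicontinuity theorem of EGA III for (i) and states without proof the key fact for (ii), namely that a semistable degree-zero bundle $V$ of rank $r$ on a curve has $\dim H^0\le r$ with equality exactly when the evaluation map is an isomorphism; you supply the cohomology-and-base-change bookkeeping for (i) (just take the strata $T_i$ with their reduced structure so that Grauert's theorem applies) and a correct induction establishing the key fact for (ii). For (iii) you take a genuinely different route: the paper argues that on an integral base the Harder--Narasimhan filtration of the generic fibre spreads out, so $\mu_{\rm max}(E_t)=\mu_{\rm max}(E_\eta)$ on a dense open, and then concludes by Noetherian induction; you instead bound $\mu_{\rm max}(E_t)$ directly by combining the uniform bound on $\dim H^0(C_t,E_t)$ (forced by upper semicontinuity on a Noetherian space) with Riemann--Roch applied to the maximal destabilizing subbundle, plus the trivial lower bound $\mu_{\rm max}(E_t)\ge \deg(E_t)/r$. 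Your argument is more elementary, avoids any spreading-out of the HN filtration, and produces an explicit bound; the paper's is shorter but leans on relative HN theory. Both are valid proofs.
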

\begin{proof} The semicontinuity theorem \cite[Thm.~7.7.5]{EGAIII} gives (i), 
and also gives that for any geometric point 
$t$ in the closure of $T_0$, ${\rm dim}_{k(t)} H^0(C_t, E_t)\ge r$. 
For (ii), we note that ${\rm dim}_{k(t)} H^0(C_t, V)\le r$  for any semistable 
vector bundle $V$ of rank $r$ and degree $0$ on $C_t$, and equality holds if and 
only if $ H^0(C_t, V)\otimes_{k(t)} \sO_{C_t} \xrightarrow{\cong} V$.

For (iii), note that if $T$ is integral, $\eta$ is a  generic    point, 
then for an open dense subset $U\subset T$, and any point $t\in U$, one has  
 $\mu_{\rm max}(E_{\eta})=\mu_{\rm max}(E_t)$. Now (iii) follows in general 
by Noetherian induction.

\end{proof}

\begin{defn}\label{unipotent}
Let $X$ be a smooth projective variety over an algebraically closed field 
$k$, and let $E$ be 
a vector bundle of rank $r$ on $X$.
\begin{enumerate}
\item[(i)]  We say $E$ is {\em unipotent} if $E$ has a filtration by subbundles, 
the 
associated graded bundle of which  is a trivial vector bundle. 
\item[(ii)] We say $E$ is  $F_X$-{\em nilpotent of index} $N$  for some natural 
number  $ N>0$, if the $N$-th 
iterated Frobenius pullback $F_X^{N*}E$ is a trivial vector bundle.  
\end{enumerate}
\end{defn}

\begin{lemma}\label{Fnilpotent}
 Let $X$ be a smooth projective variety over an algebraically closed field $k$ 
of 
characteristic $p$, and let $E$ be a unipotent vector bundle of rank $r$, which 
is also 
$F_X$-nilpotent of some index. Then there is an $N=N(X,r)$, depending only the 
variety $X$ and the 
rank $r$, such that $E$ is $F_X$-nilpotent of index $N$. 
\end{lemma}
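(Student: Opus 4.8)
The plan is to reduce the statement to a uniform bound on the index of $F_X$-nilpotence for \emph{trivial} bundles and for extensions, by induction on the length of a unipotent filtration, and to extract the uniformity from the geometry of $X$ via a Noetherian/boundedness argument. First I would observe that a trivial bundle is obviously $F_X$-nilpotent of index $1$, so the content is entirely in how the index can grow when one passes from a subbundle $E'$ and quotient $E''$ (each unipotent and $F_X$-nilpotent by an already-known uniform index, by induction on rank) to an extension $0\to E'\to E\to E''\to 0$. Applying $F_X^{N*}$ with $N$ large enough to trivialize both $F_X^{N*}E'$ and $F_X^{N*}E''$, we are reduced to bounding the index of $F_X$-nilpotence of an extension $0\to \sO_X^{a}\to V\to \sO_X^{b}\to 0$, i.e. of a bundle $V$ classified by a class in $\Hom(\sO_X^b,\sO_X^a\otimes ??)$ — more precisely by an element of $H^1(X,\sO_X)^{\oplus ab}$. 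The point is that $F_X^*$ acts on $H^1(X,\sO_X)$, and iterating this Frobenius-linear endomorphism: since $H^1(X,\sO_X)$ is a finite-dimensional $k$-vector space, its Frobenius action has a stable part (the semisimple/bijective part, on which Frobenius never kills a class) and a nilpotent part; the extension class of an $F_X$-nilpotent $V$ must lie in the nilpotent part, and the nilpotency index of Frobenius on that finite-dimensional space is bounded by $\dim_k H^1(X,\sO_X)$.

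The key steps, in order, are: (1) set up the induction on $r=\rank E$; the base case $r=1$ is trivial since a unipotent line bundle is $\sO_X$. (2) For the inductive step, choose a unipotent filtration of $E$; its bottom piece is trivial and the quotient $\bar E$ is unipotent of smaller rank, hence — once we know it is $F_X$-nilpotent — of index $N(X,r-1)$ by induction. Check that a subbundle and quotient of an $F_X$-nilpotent bundle are again $F_X$-nilpotent: this is clear because $F_X^{N*}$ is exact and $F_X^{N*}E$ trivial forces $F_X^{N*}$ of sub and quotient to be subbundle and quotient of a trivial bundle that are themselves $F_X$-nilpotent; one may need a short argument that a subbundle of a trivial bundle which is $F_X$-nilpotent is in fact trivial, using that on a projective variety a subsheaf of $\sO_X^n$ with trivial Frobenius pullback has sections generating it. (3) Reduce to the extension problem $0\to \sO_X^a\to V\to \sO_X^b\to 0$ after a uniform Frobenius twist, and identify the class of $V$ in $\Ext^1(\sO_X^b,\sO_X^a)=H^1(X,\sO_X)\otimes \Hom(k^b,k^a)$. (4) Observe $F_X^{m*}V$ corresponds to the image of this class under $(F_X^*)^m$ acting on $H^1(X,\sO_X)$; $V$ is $F_X$-nilpotent iff some power of $F_X^*$ kills the class, which happens iff the class lies in the nilpotent part of the Frobenius action, and then it is killed after at most $h:=\dim_k H^1(X,\sO_X)$ steps. (5) Assemble the bound: $N(X,r)$ can be taken to be something like $N(X,r-1) + N(X,1) + h(X)$, or more crudely a bound depending only on $X$ and $r$ built from $h^1(X,\sO_X)$ and the filtration length $\le r$.

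The main obstacle I anticipate is step (3)–(4): controlling what happens to extension classes under Frobenius pullback when the sub and quotient have themselves only been trivialized after a high Frobenius twist, so that one must be careful that trivializing $E'$ and $E''$ does not reintroduce a long-to-kill extension class — in other words, one needs that the relevant $\Ext^1$ groups and the Frobenius action on them are all controlled by invariants of $X$ alone (essentially $H^1(X,\sO_X)$ and its Frobenius, together with $H^0$'s which are just $k$), independently of the particular bundle. A clean way to organize this is to note that after applying $F_X^{N_0*}$ with $N_0=N_0(X,r)$ large, $E$ becomes a successive extension of trivial bundles by trivial bundles, so $E$ lives in the category of iterated extensions of $\sO_X$; that category is equivalent to finite-dimensional comodules over (the pro-unipotent part relevant to) $H^*(X,\sO_X)$, and the Frobenius acts on this finite combinatorial datum, whose nilpotency is again governed by $\dim_k H^1(X,\sO_X)$ and $r$. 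Making this last equivalence precise — or avoiding it by a direct cohomological $\delta$-functor computation — is where the real work lies; everything else is formal.
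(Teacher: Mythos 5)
Your plan is correct and follows essentially the same route as the paper: induction on the rank via a unipotent filtration, reduction (after a uniform Frobenius twist supplied by the inductive hypothesis) to an extension of a trivial bundle by a trivial bundle classified in $H^0(\cdot)^\vee\otimes_k H^1(X,\sO_X)$, and the observation that a class in a finite-dimensional $k$-vector space killed by some power of the $p$-linear Frobenius is killed by the $g$-th power, $g=\dim_k H^1(X,\sO_X)$ (Fitting decomposition). The final comodule-category digression is unnecessary; the paper simply takes the subbundle to be a single $\sO_X$ and the quotient unipotent of rank $r-1$, which sidesteps the worries you raise in steps (3)--(4).
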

\begin{proof} We do induction on $r$, where the case   $r=1$ is trivial.
If $E$ is unipotent of rank $r>1$, there is an exact sequence
 \[0\to \sO_X\to E\to E'\to 0\]
 where $E'$ is unipotent of rank $r-1$. 
 Since $E$ is $F$-nilpotent of some index, it is easy to see that $E'$ is 
$F_X$-nilpotent of the 
same index, by taking that particular iterated Frobenius pullback of the above 
exact sequence, and 
noting that the quotient of a trivial bundle on  $X$  by a trivial subbundle is 
also a trivial 
bundle.
Hence by induction, there exists $N'=N'(X,r-1)$ so that $F_X^{N'*}E'$ is 
trivial. 
Then $F_X^{N'*}E$ determines an element of 
\[{\rm Ext}^1_X( H^0(X, F^{N'*}_XE')\otimes_k 
\sO_X, \sO_X)=H^0(X, F^{N' *}_X E')^\vee \otimes_kH^1(X,\sO_X).\]
The Frobenius $F_X$ sends a vector $v\otimes_k \gamma$ in this tensor product to 
$F^*_Xv\otimes_{F^*_k k} F_X^*\gamma$.
It therefore  suffices to note that if an element $\alpha\in H^1(X,\sO_X)$ is 
$F_X$-nilpotent, then 
$F_X^{g*}\alpha=0$, where $g=\dim_k H^1(X,\sO_X)$.
\end{proof}

We also note for reference the following two properties. The first one is an 
easy consequence of the fact that the slope of a vector bundle gets multiplied 
by $p$ under the Frobenius pull-back  and the second one is ultimately a consequence of this as well. 
\begin{lemma}\label{mu-max}
 Let $X$ be a smooth projective variety over an algebraically closed field $k$ 
of characteristic $p>0$, with a chosen polarization, and let $E$ be  a 
vector bundle with numerically trivial Chern classes. Let $E^{(m)}$ be a vector 
bundle  such that $F_X^{m*} E^{(m)}\cong 
E$, where $p^m>\mu_{\rm max}(E)\cdot {\rm rank}(E)$. Then $E^{(m)}$ is 
$\mu$-semistable. 
\end{lemma}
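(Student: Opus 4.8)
The plan is to argue by contradiction, the only inputs being that the degree with respect to the chosen polarization $H$ is multiplied by $p^m$ under the $m$-th iterated absolute Frobenius pull-back, that this pull-back preserves rank and injections of sheaves, and that the $H$-degree of a coherent subsheaf is an integer. Throughout, slopes $\mu_H$ and the Harder--Narasimhan invariant $\mu_{\rm max}$ on $X$ are taken relative to $H$. Since $F_X^*\sO_X(D)\cong\sO_X(pD)$, one has numerically $c_1(F_X^*G)=p\,c_1(G)$ for every coherent sheaf $G$, whence $\deg_H(F_X^{m*}G)=p^m\deg_H(G)$ and $\mu_H(F_X^{m*}G)=p^m\mu_H(G)$, while $\rank(F_X^{m*}G)=\rank(G)$; this is the ``multiplication by $p$'' recalled in the text.

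First I would note that $\mu_H(E)=0$, since the Chern classes of $E$ are numerically trivial, and hence $\mu_H(E^{(m)})=p^{-m}\mu_H(F_X^{m*}E^{(m)})=p^{-m}\mu_H(E)=0$; also $\rank(E^{(m)})=\rank(E)=r$. Suppose, for contradiction, that $E^{(m)}$ is not $\mu$-semistable, and let $S\subset E^{(m)}$ be its maximal destabilizing subsheaf, so that $\mu_H(S)=\mu_{\rm max}(E^{(m)})>\mu_H(E^{(m)})=0$. Because $H$ is an honest ample line bundle, $\deg_H(S)=c_1(S)\cdot H^{\dim X-1}$ is a positive integer, and $\rank(S)\le r$, so that $\mu_H(S)=\deg_H(S)/\rank(S)\ge 1/r$.

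Next I would pull $S$ back along the $m$-th power of Frobenius. As $F_X$ is finite and faithfully flat on the regular variety $X$, the functor $F_X^{m*}$ is exact, so $F_X^{m*}S$ is a subsheaf of $F_X^{m*}E^{(m)}\cong E$. By the multiplicativity recalled above,
\[
\mu_H\bigl(F_X^{m*}S\bigr)=p^m\,\mu_H(S)\ \ge\ \frac{p^m}{r}\ >\ \mu_{\rm max}(E),
\]
the last inequality being exactly the hypothesis $p^m>\mu_{\rm max}(E)\cdot\rank(E)$. But $F_X^{m*}S$ is a subsheaf of $E$, so by the very definition of $\mu_{\rm max}(E)$ we have $\mu_H(F_X^{m*}S)\le\mu_{\rm max}(E)$, a contradiction. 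Therefore $E^{(m)}$ is $\mu$-semistable.

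There is no essential difficulty here; the points that need a little care are the precise normalization of slopes under Frobenius pull-back and the integrality of $\deg_H(S)$ --- it is exactly this integrality that converts the strict inequality in the hypothesis into the desired contradiction. I should also keep in mind that a destabilizing subobject is only a subsheaf, not necessarily a subbundle, so the whole argument is carried out with torsion-free coherent sheaves and their determinants rather than with vector bundles.
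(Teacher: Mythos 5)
Your proof is correct, and it is exactly the argument the paper has in mind: the paper gives no written proof, merely remarking that the lemma ``is an easy consequence of the fact that the slope of a vector bundle gets multiplied by $p$ under the Frobenius pull-back,'' and your write-up fleshes out precisely that sketch, with the integrality of $\deg_H(S)$ supplying the quantitative lower bound $1/r$ needed to invoke the hypothesis $p^m>\mu_{\rm max}(E)\cdot {\rm rank}(E)$.
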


Recall that  on a smooth projective  scheme $X_S$ over a scheme $S$, a stratified bundle 
$\M$ is defined by an infinite Frobenius descent sequence  $(E^n, \tau^n)$,  
where $E^n$ is a vector bundle on the $n$-th Frobenius twist $X^{(n)}_S$ of 
$X_S$ over $S$ with isomorphisms $F^*_{X^{(n)}_S/S} E^{n} 
\xrightarrow{\tau^{n-1}} E^{n-1}$. 
If $S={\rm Spec}(k)$ where $k$ is a perfect characteristic $p>0$ field, as an 
object we can also write $\M=(E_m,\sigma_m)_{m\ge 0}$ where $E_m$ is a vector 
bundle on $X_k=X$ and   
$F^*_{X} E_n \xrightarrow{\sigma_{n-1}} E_{n-1}$. 
If $\M$ is given, and $n$ is a natural number, we denote by $\M(n)$ the 
stratified bundle  $\M(n)=(E_{n+m}, \sigma_{n+m})_{m \ge 0}$.

\begin{lemma}[\cite{EM10}, Proposition~3.2] \label{stable}
Let $X$ be a smooth projective variety. Any stratified bundle  is filtered by 
stratified subbundles $\M^i\subset \M^{i-1} \ldots  \subset \M^0=\M$ such that 
the associated graded stratified bundle  $ \oplus_i \M^i/\M^{i+1}$ has the 
property that  $\M^i/\M^{+1}$ is irreducible and for some $n$, all the 
underlying vector bundles of $(\M^i/\M^{i+1}) (n)$ are $\mu$-stable with trivial 
numerical Chern classes. 
\end{lemma}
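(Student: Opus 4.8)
The plan is to reduce to a single irreducible stratified bundle and then show that a sufficiently large Frobenius shift has $\mu$-stable underlying bundles. First I would use the Tannakian structure of the category of stratified bundles on $X$: it is $k$-linear abelian and every object has finite length, so $\M$ admits a Jordan--H\"older filtration by stratified subbundles with irreducible subquotients, which produces the filtration $\M^\bullet$. It then remains to prove that every irreducible stratified bundle $\sN=(E_m,\sigma_m)_{m\ge 0}$ admits an $n$ with $E_m$ $\mu$-stable and with numerically trivial Chern classes for all $m\ge n$; taking the maximum of these $n$ over the finitely many graded pieces then finishes the proof.

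For such an $\sN$ the numerically trivial Chern classes are immediate: the relation $E_{m-1}\cong F_X^*E_m$ gives $c_i(E_{m-1})=p^i c_i(E_m)$ in the numerical group, so each $c_i(E_m)$ is infinitely $p$-divisible in a finitely generated group, hence torsion; in particular all $E_m$ have degree $0$ for the chosen polarization. For semistability I would use the slope-scaling under Frobenius already exploited in Lemma~\ref{mu-max}: pulling a maximal destabilising subsheaf of $E_m$ back along the (flat, hence exact) Frobenius gives $\mu_{\rm max}(E_{m-1})\ge p\,\mu_{\rm max}(E_m)\ge 0$, so $0\le \mu_{\rm max}(E_m)\le p^{-m}\mu_{\rm max}(E_0)$; since these are rationals with uniformly bounded denominators they vanish for $m$ large, i.e.\ $E_m$ is $\mu$-semistable of degree $0$ for all $m\ge n_0$.

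Next I would upgrade semistability to simplicity. Using flatness of $F_X$ again, $\varphi\mapsto \sigma_{m-1}\circ F_X^*\varphi\circ \sigma_{m-1}^{-1}$ is an ($F_X$-semilinear) injection $\mathrm{End}_{\sO_X}(E_m)\hookrightarrow \mathrm{End}_{\sO_X}(E_{m-1})$, so $\dim_k \mathrm{End}_{\sO_X}(E_m)$ is non-increasing in $m$ and eventually constant, from $m=n_1$ say. Then these injections are bijective for $m>n_1$, so a stratified endomorphism of $\sN$ is determined by, and may be prescribed arbitrarily at, level $n_1$; hence $\mathrm{End}_{\sO_X}(E_{n_1})=\Hom_{\mathrm{strat}}(\sN,\sN)$, which is $k$ by Schur's lemma since $k$ is algebraically closed. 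Thus $E_m$ is simple for all $m\ge n_1$.

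The main obstacle is the last step: deducing $\mu$-stability for $m\gg 0$ from simplicity plus semistability. Here is the strategy I would try. If $E_m$ is not $\mu$-stable then, pulling back a saturated slope-$0$ subsheaf, neither is $E_{m-1}$, so it suffices to exclude that non-stability persists for all $m\ge n_2:=\max(n_0,n_1)$. Assuming it does, let $\rho_m$ be the minimal rank of a nonzero proper slope-$0$ subsheaf of $E_m$; pull-back gives $\rho_{m-1}\le \rho_m$, so $\rho_m$ stabilises to some $\rho$. A slope-$0$ subsheaf of minimal rank is $\mu$-stable, and any two such inside the semistable $E_m$ meet in zero, so the (saturated) sum $T_m\subseteq E_m$ of all rank-$\rho$ slope-$0$ subsheaves is a slope-$0$ polystable sheaf canonically attached to $E_m$; Frobenius carries rank-$\rho$ slope-$0$ subsheaves to rank-$\rho$ slope-$0$ subsheaves, so $F_X^*T_m\subseteq T_{m-1}$, $\mathrm{rank}\,T_m$ is non-increasing, and once it stabilises one gets $\sigma_{m-1}(F_X^*T_m)=T_{m-1}$ for $m\gg 0$. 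Then $(T_m)_{m\gg 0}$ with the induced $\sigma_m$ is a stratified subbundle of a shift of $\sN$, nonzero and proper (properness because $T_m=E_m$ would make $E_m$ polystable, hence $\mu$-stable as it is simple), contradicting irreducibility of $\sN$. The delicate points in executing this --- which I expect to be the real content of the lemma (cf.\ \cite[Prop.~3.2]{EM10}) --- are handling codimension-$\ge 2$ torsion when $\dim X>1$: one must pass to saturations or reflexive hulls to make the equality $\sigma_{m-1}(F_X^*T_m)=T_{m-1}$ and the dichotomy ``$T_m=E_m$'' honest, using that an $\sO_X$-coherent object carrying all Frobenius descents is automatically locally free, and one must check carefully that the resulting $T_m$ glue along the $\sigma_m$.
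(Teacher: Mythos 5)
The paper gives no proof of this lemma at all --- it is quoted verbatim from \cite[Prop.~3.2]{EM10} --- and your argument is essentially a correct reconstruction of the proof given there: Jordan--H\"older in the Tannakian category, $p$-divisibility forcing numerically trivial Chern classes, the slope-scaling bound for eventual semistability, stabilization of endomorphism algebras for simplicity, and a socle-type argument (here cleverly restricted to minimal-rank slope-$0$ subsheaves, which makes Frobenius-compatibility automatic) to upgrade to $\mu$-stability. The saturation and codimension-$\ge 2$ torsion issues you flag at the end are genuine and are precisely what the published erratum to \cite[Prop.~3.2]{EM10} deals with, so your assessment of where the real work lies is accurate.
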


In view of Lemma~\ref{mu-max} and Lemma~\ref{stable},  we shall consider the 
moduli scheme of $\mu$-stable  vector bundles. Given  a natural number $r>0$, 
$X_S\to S$ a smooth projective morphism, there is a coarse moduli 
quasi-projective scheme $M(r, X_S)\to S$ of $\mu$-stable  bundles   of rank $r$ 
with trivial numerical Chern classes, which universally corepresents the functor 
of families of geometrically stable bundles (\cite[Thm.~4.1]{Lan04}). In 
particular, if $T\to S$ is any morphism of schemes, the base change is an 
isomorphism
\ga{bc}{M (r, X_T)\xrightarrow{\cong} M(r, X_S)\times_S T.}
While applied to $F_S: T=S\to S$, this yields the isomorphism
\ga{fbc}{M(r, X'_S) \xrightarrow{\cong} M(r, X_S)\times_{S, F_S} S.}

We finally note the following, which is immediate from Lemma~\ref{Fnilpotent}.
\begin{cor}\label{Fnilpotent-cor}
If $X$ is a smooth projective variety over an algebraically closed field $k$ of charcateristic $p>0$, and  
$\M$ is a  stratified bundle, such that for the underlying sequence $(E_m,\sigma_m)$, each $E_m$ 
is an $F$-nilpotent vector bundle, then $\M$ is trivial as a stratified vector bundle.\end{cor}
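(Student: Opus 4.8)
The plan is to promote the given pointwise $F_X$-nilpotence of the underlying bundles $E_m$ to a single uniform index, and then to push triviality all the way down the Frobenius tower. Since each $E_m$ is a unipotent $F_X$-nilpotent bundle of the same rank $r$ as $\M$, Lemma~\ref{Fnilpotent} produces one natural number $N=N(X,r)$, independent of $m$, such that $F_X^{N*}E_m$ is a trivial vector bundle for every $m\ge 0$.

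The key step is then to feed the stratification isomorphisms back in. Composing $\sigma_j$, $F_X^{*}\sigma_{j+1}$, \ldots, $F_X^{(N-1)*}\sigma_{j+N-1}$ yields, for each $j\ge 0$, a canonical isomorphism $F_X^{N*}E_{j+N}\xrightarrow{\cong}E_j$. Applying the previous step with $m=j+N$, I conclude that $E_j$ is a trivial vector bundle for \emph{every} $j\ge 0$, not merely for $j=0$.

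It remains to observe that a stratified bundle $(E_j,\sigma_j)$ whose underlying bundles are all trivial is itself trivial. After fixing isomorphisms $E_j\cong\sO_X^{\oplus r}$ and using that $X$ is projective and connected, so that $\Gamma(X,\sO_X)=k$, each $\sigma_j$ becomes a matrix in $GL_r(k)$, and $F_X^{*}$ acts on a constant matrix by raising its entries to the $p$-th power. An isomorphism of $\M$ onto the trivial stratified bundle $(\sO_X^{\oplus r},\mathrm{id})$ is then a sequence $h_j\in GL_r(k)$ with $h_j\sigma_j=h_{j+1}^{(p)}$, the superscript denoting the entrywise $p$-th power; since $k$ is algebraically closed, hence perfect, one solves this recursively starting from $h_0=\mathrm{id}$ and extracting at each stage the unique (and invertible) $p$-th root. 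Equivalently, every section of $E_0$ lifts uniquely to a stratified section, so $\M$ has an $r$-dimensional space of stratified sections; either way $\M$ is trivial.

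I do not anticipate a genuine obstacle, since the backward induction down the tower and the Frobenius-root argument are formal. The real content sits in the first step: it is precisely the uniformity of the index $N$ — which is what Lemma~\ref{Fnilpotent} buys from the unipotence hypothesis — that lets one descend from "$E_{j+N}$ becomes trivial after $N$ Frobenius pullbacks" to "$E_j$ is trivial", simultaneously for all $j$. Without such a uniform bound the indices could grow with $j$ and the conclusion would fail, so this is the heart of the matter.
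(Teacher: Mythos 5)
Your argument is precisely the one the paper intends when it declares the corollary ``immediate from Lemma~\ref{Fnilpotent}'': the uniform index $N=N(X,r)$ turns each isomorphism $F_X^{N*}E_{j+N}\cong E_j$ into a proof that every $E_j$ is trivial, and a stratified bundle with trivial underlying bundles is trivial (your $GL_r(k)$/entrywise $p$-th-root computation, or equivalently the lifting of sections, is a correct way to close this last step). The one caveat, which you share with the paper rather than introduce: Lemma~\ref{Fnilpotent} needs the $E_m$ to be \emph{unipotent}, which you assert but which is not literally among the corollary's hypotheses; it does hold at the unique point where the corollary is invoked in Section~\ref{sec:gen} (there $\pi^*E_m$ is filtered with trivial graded quotients by the irreducible case), so your reconstruction matches the intended proof, and you correctly identify the uniformity of $N$ as the substantive content.
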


\section{Reduction to the curve case}

\begin{lemma}\label{reductions}
It suffices to prove Theorem~\ref{Mainthm} in the special case when $Y=C$ is a 
smooth 
projective curve, and $X$ is a smooth projective surface.   
\end{lemma}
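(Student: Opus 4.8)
The plan is a two-stage reduction using Bertini-type arguments and standard Lefschetz-type facts for the étale fundamental group. Stage one reduces the source $Y$ to a curve; stage two reduces the target $X$ to a surface. For stage one, suppose Theorem~\ref{Mainthm} is known whenever $Y$ is a smooth projective curve. Given an arbitrary $\pi:Y\to X$ as in the statement, with $\dim Y\geq 2$, I would choose a smooth projective curve $C\subset Y$ that is a complete intersection of sufficiently ample general hyperplane sections of $Y$. By the Lefschetz theorem for the étale fundamental group (see SGA2, or its extension in positive characteristic), the inclusion $C\hookrightarrow Y$ induces a surjection $\pi_1^{\et}(C)\surj\pi_1^{\et}(Y)$; in particular the composite $C\to Y\to X$ still has the property that pulling back any finite étale cover $Z\to X$ splits completely over $C$ (a cover that splits over $Y$ certainly splits over the subvariety $C$). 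By the curve case of the theorem, $(\pi|_C)^*\M$ is trivial as a stratified bundle on $C$ for every stratified bundle $\M$ on $X$; equivalently each underlying bundle $E_m$ of $\pi^*\M$ restricts to a trivial bundle on $C$. Since $C$ is a complete intersection of ample divisors and $\pi^*\M$ has numerically trivial Chern classes (stratified bundles do), I would invoke the usual restriction theorems (Mehta--Ramanathan, together with the fact that a semistable bundle with vanishing Chern classes on $Y$ that is trivial on an ample complete-intersection curve is itself trivial — one has $h^0$ controlled on the curve, and global sections extend) to conclude that each $E_m$ is trivial on $Y$, hence $\pi^*\M$ is trivial. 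The Harder--Narasimhan boundedness from Lemma~\ref{semicontinuity}(iii) and the semistability input from Lemmas~\ref{mu-max} and \ref{stable} are what make this last step legitimate after passing to $\M(n)$.

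For stage two, I now reduce to the case $\dim X = 2$. Assume the theorem holds whenever $Y$ is a curve and $X$ is a surface, and let $\pi: C\to X$ be given with $C$ a curve and $\dim X\geq 3$. The image $\pi(C)$ lies in $X$; I would cut $X$ down by general ample hyperplane sections to obtain a smooth projective surface $S\subset X$ containing $\pi(C)$ — since $\pi(C)$ is a curve, a general linear section of the right codimension through it stays smooth by Bertini (one may need $\pi(C)$ to avoid the singular locus of the general section, which holds as $\dim\pi(C)=1$). Then $\pi$ factors as $C\to S\hookrightarrow X$. By the Lefschetz hyperplane theorem for $\pi_1^{\et}$, the map $\pi_1^{\et}(S)\to\pi_1^{\et}(X)$ is an isomorphism (for $\dim X\geq 3$) resp. surjection, so the hypothesis that finite étale covers of $X$ pull back to split covers of $C$ transfers to: finite étale covers of $S$ pull back to split covers of $C$. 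Moreover every stratified bundle $\M$ on $X$ restricts to a stratified bundle $\M|_S$ on $S$, and $\pi^*\M = (\pi|^S)^*(\M|_S)$. The surface case of the theorem then gives that $(\pi|^S)^*(\M|_S)$ is trivial, hence $\pi^*\M$ is trivial. This completes the reduction.

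The main obstacle, and the step requiring the most care, is the descent of triviality from the curve $C\subset Y$ back up to $Y$ in stage one: knowing $E_m|_C$ is trivial does not formally imply $E_m$ is trivial without control on semistability and on the restriction map on cohomology. The clean way to handle this is exactly the mechanism the paper has set up — replace $\M$ by $\M(n)$ so that (via Lemmas~\ref{mu-max} and~\ref{stable}) all underlying bundles become $\mu$-semistable with numerically trivial Chern classes, use Lemma~\ref{semicontinuity}(iii) to bound the relevant invariants uniformly in $m$, and then apply a restriction theorem plus the characterization of triviality by $h^0$ (Lemma~\ref{semicontinuity}(i),(ii)) to promote triviality on $C$ to triviality on $Y$. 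One should also check that choosing $C$ (resp. $S$) as a general complete intersection can be done simultaneously well enough for all the Lefschetz statements to apply — but these are standard and the positive-characteristic versions of the $\pi_1^{\et}$-Lefschetz theorems are available. A secondary point is that the reduction must preserve the hypothesis on $\pi$: this is automatic since "splits completely" is inherited by restriction to subvarieties of the source and is detected through the surjections on $\pi_1^{\et}$ coming from Lefschetz on the target.
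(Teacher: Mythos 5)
Your two-stage skeleton (cut $Y$ down to a curve, then fatten the target up from $\pi(C)$ to a surface) matches the paper's, but both stages as you have written them contain genuine gaps. In stage one, the paper does not argue bundle-by-bundle at all: it invokes the Lefschetz theorem for the \emph{stratified} fundamental group (\cite[Thm.~3.5]{ES16}), i.e.\ the surjectivity of $\pi_1^{\rm strat}(C)\to \pi_1^{\rm strat}(Y)$ for a complete intersection curve $C\subset Y$, which immediately converts ``$\pi^*\M|_C$ trivial'' into ``$\pi^*\M$ trivial''. Your substitute --- ``a semistable bundle with vanishing Chern classes that is trivial on an ample complete-intersection curve is trivial, since $h^0$ is controlled and global sections extend'' --- is precisely the nontrivial content of that theorem, not a consequence of Mehta--Ramanathan. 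Extension of sections from $C$ to $Y$ requires vanishing of $H^1(Y, E_m\otimes I_C)$ (via the Koszul resolution of $I_C$), and the degree of $C$ needed for this depends a priori on $E_m$; since a stratified bundle carries infinitely many underlying bundles $E_m$, you must choose $C$ \emph{before} you see them and then prove a uniform vanishing over the whole (bounded) family. You gesture at the boundedness lemmas but do not carry this out, and without it the step ``trivial on $C$ $\Rightarrow$ trivial on $Y$'' is unproven. Citing \cite[Thm.~3.5]{ES16} is the clean fix.

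In stage two there is a concrete failure: a general complete-intersection surface through $\pi(C)$ need \emph{not} be smooth if $\pi(C)$ is singular, and $\pi(C)$ can certainly be singular ($\pi$ need not be injective or an immersion). For instance, if $\pi(C)$ has an ordinary triple point in a threefold, locally the three coordinate axes with ideal $(xy,yz,zx)\subset \mathfrak{m}^2$, then \emph{every} hypersurface containing $\pi(C)$ is singular at that point, so no smooth surface through $\pi(C)$ exists. Your parenthetical ``which holds as $\dim\pi(C)=1$'' does not address this. The paper's proof inserts an extra step exactly to handle it: first perform an embedded resolution of $\pi(C)\subset X$ by a sequence of blow-ups at closed points, obtaining $C\to\tilde X\to X$ with smooth image in $\tilde X$ and with $\pi_1^{{\rm \acute{e}t}}(\tilde X)\to\pi_1^{{\rm \acute{e}t}}(X)$ an isomorphism, and only then apply Jouanolou's Bertini theorem to find a smooth complete-intersection surface containing the (now smooth) image curve. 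The remaining transfer of hypotheses via $\pi_1^{{\rm \acute{e}t}}(X')\xrightarrow{\ \cong\ }\pi_1^{{\rm \acute{e}t}}(X)$ in your write-up is fine, but note that you really do need the isomorphism (not merely a surjection) to pull finite \'etale covers of the surface back from $X$.
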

\begin{proof}
Let $C\subset Y$ be a nonsingular complete intersection of very ample divisors 
in $Y$. By \cite[Thm.~3.5]{ES16}, the homomorphism  $\pi_1^{\rm strat}(C)\to 
\pi_1^{\rm strat}(Y)$ is surjective.  
On the other hand, the 
homomorphism $\pi_1^{{\rm \acute{e}t}}(C)\to \pi_1^{\rm \acute{e}t}(X)$  factors 
through $\pi_1^{\rm \acute{e}t}(Y)\to 
\pi_1^{\rm \acute{e}t}(X)$, and so $C\to X$ satisfies the hypotheses of 
Theorem~\ref{Mainthm}.  This proves the first part of the lemma.  We assume now 
$Y=C$. 

On the other hand, we can make an embedding resolution of  the closed embedding 
$\pi(C)\subset X$ by a sequence of blow ups at closed points. 
So  the morphism $\pi:C\to X$ factors as a composition
\[C\stackrel{\tilde{\pi}}{\longrightarrow}\tilde{X}\stackrel{f}{\longrightarrow} 
X,\]
where $\tilde{\pi}:C\to \tilde{X}$ has smooth image, and $f:\tilde{X}\to X$ is a 
composition 
of blow ups at closed points.  Thus $f_*: \pi_1^{\rm \acute{e}t}(\tilde X)\to 
\pi_1^{\rm \acute{e}t}(X)$ is 
an isomorphism. 
Hence, it suffices to prove Theorem~\ref{Mainthm} for morphisms $\pi:C\to X$, 
where $C$ is a 
smooth curve, and $\pi(C)=D\subset X$ is a smooth curve as well. By  Bertini 
theorem  \cite[Thm.~6.3]{Joa83}, there is a smooth projective surface $X'\subset 
X$ which is a complete intersection 
of very ample divisors, and with $D\subset X'$. Then $\pi_1^{\rm 
\acute{e}t}(X')\to \pi_1^{\rm \acute{e}t}(X)$ is an 
isomorphism \cite[X, Thm.~3.10]{SGA2}. This finishes the proof.
\end{proof}

\section{Proof of Theorem~\ref{Mainthm} in the irreducible case} \label{sec:irr}
The aim of this section is to prove Theorem~\ref{Mainthm} for stratified bundles 
which are irreducible.  So $\pi: C\to X$ is a projective morphism between a 
smooth projective curve $C$ and a smooth projective surface over an 
algebraically closed field $k$ of characteristic $p>0$. 
As the triviality of  $\pi^*\M$ is equivalent to the triviality of $\pi^*\M(n)$ 
for any natural number $n$, 
Lemma~\ref{stable} enables us to assume that $\M =(E_m,\sigma_m)_{m\ge 0}$ has 
underlying  $E_m$ being $\mu$-stable with trivial
numerical Chern classes. 
 Let $\sS_X(r) $  be the set of isomorphism classes  $[\M]$ of 
irreducible stratified vector bundles $\M$ of rank $r$, all of whose underlying 
vector bundles are  $\mu$-stable. Let $S_r(n)\subset M(r,X)(k)$ be the set of 
all moduli points  $[E]\in M(r,X)(k)$ 
such that $E$ is one of the underlying  bundles of $\M(n)$, for some $[M]$ of 
$\sS_X(r)$.  We denote by 
$\overline{S_r(n)} \subset M(r,X)$ the Zariski closure,  which is a reduced 
closed subscheme. 
This defines a  decreasing sequence of closed subschemes
\[M_{r,X}\supset \overline{S_r}\supset 
\overline{S_r(1)}\supset\overline{S_r(2)}\supset\cdots\]
which, by Noetherianity,  becomes stationary at some finite stage. Set
\[\sN_r= \sN=\cap_{n\geq 1}\overline{S_r(n)}=\overline{S_r(N)}\mbox{ for some 
$N\geq 0$}.\]
The assignement   
\ga{Phi}{\Phi:\sN\to \sN, \   E\mapsto F_X^{*}E }
induces a dominant rational morphism  which does not commute with the structure 
morphism $\sN\to {\rm Spec} (k)$. 
One can  reinterpret  $\Phi$ as follows.  Let  $\sN'\subset M(r,X')$ 
be defined in the same way as $\sN\subset M(r, X)$.  Via the diagram 
\eqref{fbc}, one obtains a morphism
\ga{Nbc}{  F_k: \sN\times_{k, F_k} k \to \sN',}
which is a isomorphism over $k$ as we assumed $k$ to be algebraically closed.  
One has a factorization
\ga{PhiPhi'}{\xymatrix{\ar@/^2pc/[rr]^{\Phi} \ar[d] \sN  \ar[r]^{F_k^{-1}} & 
\ar[d]  \sN' \ar[r]^{\Phi'} & \sN \ar[dl]\\
{\rm Spec}(k) \ar[r]^{F_k^{-1}} & {\rm Spec}(k) 
}}
Now  $\Phi'$ is a dominant morphism of $k$-schemes
\ga{Phi'}{   \Phi' :\sN' \to \sN, \   [E]\mapsto [F_{X/k}^*E] .}

\medskip
Let $W$ be a scheme of finite type over $\F_p$ over which our data 
$(\pi: C\to X,  \ \sN'\subset M(r, X'),  \  \sN\subset M(r, X), \Phi')$ have a 
flat model.  We use the notation 
$(\pi_W: C_W\to X_W, \  \sN'_W\subset M(r, X_W), \  \sN_W\subset M(r, X_W), 
\Phi'_W: \sN'_W\to \sN_W)$, where we use \eqref{fbc} for $M(r,X_W)$ and $M(r, 
X'_W)$.   We assume in addition that $C_W\to W$ and $X_W\to W$ are smooth and projective.  The 
rational map of $W$-schemes 
\ga{Phi'W}{ \Phi'_W: \sN'_W\to \sN_W,  \ [E]\mapsto [F_{X_W/W}^*E] } 
is dominant.  So shrinking $W$ is necessary, for all closed points $t \in W$, 
the restriction $\Phi_{t}: \sN'_{t}\to \sN_{t}$ is a dominant rational map of 
$k(t)$-schemes which is defined by $[E]\mapsto [F^*_{X_{t}/t} E]$.  Let $p^a$ be 
the cardinality of $k(t)$. Then 
\ga{Phia}{\Phi^a_t: \sN_t\to \sN_t, \  [E]\mapsto [F^{a*}_{X_t/t}E]}
is rational dominant.  Using now  Hrushovski's essential theorem 
\cite[Cor.~1.2]{Hru04}, one proves  as in \cite[Thm.~3.14]{EM10}, by first  
taking a finite extension of  $k(t)$ so as to separate the components of 
$\sN_t$, and by  taking the corresponding power of $\Phi_t$ which stabilizes the 
components, that the set of closed moduli points $[E]\in \sN_t$ which are stable 
under a power of the Frobenius of the residue field  $k(t)$, and in particular 
are  trivialized by a finite \'etale cover of $X_t$, is dense.

\medskip

Let $T_W\to \sN_W$ be a surjective morphism  induced by a bundle $E$ on 
$X_W\times_W T_W$ such that the induced moduli map $T_W\to M(r, X_W)$ factors 
through $\sN_W\hookrightarrow M(r, X_W)$.  
Let $V=(\pi_W\times_W {\rm id}_T)^*E$ be the pull-back of $E$ on $C_W\times_W 
T_W$.  Lemma~\ref{semicontinuity} (i) applied  to $V$ and the projection 
$C_W\times_W T_W \to T_W$ implies that there is an open $T^0_W\subset T_W$ such 
that for all points $t \in T^0_W$, $V_t$ is the trivial bundle on $C_t$.  
Lemma~\ref{mu-max} together with Lemma~\ref{semicontinuity} (ii) imply that 
$T^0_W$ contains the pull-back in $T_W$ of a non-empty open dense subscheme 
$\sN^0\hookrightarrow \sN$. By Lemma~\ref{semicontinuity} (iii) applied to $V$ and 
the morphism $C_W\times_W T_W \to T_W$, and Lemma~\ref{mu-max}, we may assume $T^0_W$ 
contains $S_r(N_r)$ for some sufficiently large $N_r$. Thus the restriction to $C$ of any irreducible 
stratified bundle on $X$ of rank $r$ is trivial. This finishes the proof.

\begin{rmk}
We single out the last property for later use: there is a non-empty open dense 
subscheme $\sN^0\hookrightarrow \sN$ such that $S_r(N_r)\subset \sN^0$, and
for any  $ [E]\in \sN^0=:\sN^0_r$ (thus on $k$), $\pi^*E=H^0(C, \pi^*E)\otimes_k 
\sO_C$.

\end{rmk}



\section{Proof of Theorem~\ref{Mainthm} in the general case} \label{sec:gen}

The aim of this section is to prove Theorem~\ref{Mainthm} in the general case.  
The notation are as in Section~\ref{sec:irr}. Applying Lemma~\ref{stable}, we 
are reduced to considering 
all stratified bundles $\M=(E_m,\sigma_m)_{m\ge 0}$ on $X$ 
of some rank $r$,
which are  filtered by stratified subbundles $\M^i\subset \M^{i-1} \ldots  
\subset \M^0=\M$ such that the associated graded stratified bundle  
$\oplus_{i=0}^{s-1} \M^i/\M^{i+1}$ has the property that  $\M^i/\M^{+1} =(E_{im}, 
\sigma_{im})_{m\ge 0}$ is irreducible of rank say $r_i$, and  all the underlying vector bundles  
$E_{im}$ are $\mu$-stable with trivial numerical Chern classes.  By 
Theorem~\ref{Mainthm}  in the irreducible case, we then have the property that 
$\pi^* \M^i/\M^{i+1}$ is trivial.

\medskip
We now start the proof.  We fix a partition $r=r_0+\ldots +r_{s-1}$. We set  
\ga{Phi'product}{ \Phi':=\Phi'_0\times_k \ldots \times_k \Phi'_{s-1}: 
\sN'=\sN'_{r_0}\times_k \ldots \times_k \sN'_{r_{s-1}} \to 
\sN:=\sN_{r_0}\times_k \ldots \times_k \sN_{r_{s-1}}}
and denote by $U'=V'_0\times_k \ldots \times_k V'_{s-1} \subset \sN'$ the dense 
product open on which the dominant rational map $\Phi'$  is defined, and is given by 
$([E_0],\ldots,[E_{s-1}])\mapsto ([F_k^*E_0],\ldots,[F_k^*E_{s-1}])$, 
$V_i= F_k(V'_i)$ via \eqref{PhiPhi'} and $U=V_0\times_k \ldots \times_k V_{s-1}$.
We note that $\sN_{r_i}^0\subset V_i$ for each $0\leq i<s$.
\begin{defn} \label{defn:A}
In $\sN(k)$, define the set  
$$A=\{ ([E_0], \ldots, [E_{s-1}]) \in  \sN'(k)\} $$
 such that
\begin{itemize}
\item[1)]  there is a bundle $E$ with a filtration such that its associated 
graded is isomorphic to $\oplus_{i=1}^{s-1} E_i$;
\item[2)] $\pi^*E$ is not $F$-nilpotent.
\end{itemize}
We define $A' =F_k^{-1} (A)\subset \sN(k)$  via \eqref{PhiPhi'}. 
\end{defn}

\begin{lemma}\label{constructible}
The following properties hold.
\begin{enumerate}
\item[(i)] The set $A$ is a constructible subset of $\sN$.
\item[(ii)]  $\Phi'(U'\cap A')\subset A$,  \ $\Phi(U\cap A)\subset A$.
\end{enumerate}
\end{lemma}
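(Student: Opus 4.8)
The two claims are essentially bookkeeping about how the set $A$ interacts with the Frobenius dynamics on the moduli of stable bundles, so the proof strategy is to reduce both to statements we already control: constructibility comes from spreading out a relative Ext-sheaf, and the invariance comes from the compatibility of $F$-nilpotency with Frobenius pullback.

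\textbf{Proof of (i).} First I would make sense of $A$ scheme-theoretically. Over $\sN = \sN_{r_0}\times_k\cdots\times_k\sN_{r_{s-1}}$ there is (after passing to a suitable surjective $T\to\sN$ as in Section~\ref{sec:irr}, or working with the universal family étale-locally) a vector bundle $\sG$ on $X\times_k\sN$ whose fibre over $([E_0],\ldots,[E_{s-1}])$ is $\bigoplus_i E_i$; the set of points where \emph{some} bundle $E$ admits a filtration with graded $\bigoplus_i E_i$ is the image, under the projection to $\sN$, of the scheme parametrizing such iterated extensions, and this scheme is constructed by successively taking the relative $\mathrm{Ext}^1$-sheaf (a coherent sheaf on $\sN$, whose support/stratification by rank is constructible) and its projective-space bundle of nonzero classes. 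Hence condition 1) cuts out a constructible subset $A_1\subset\sN$. For condition 2), over the locus $A_1$ one has a (locally, on a constructible stratification) universal extension bundle $E$ on $X\times_k\sN$; pulling back by $\pi$ gives $V=(\pi\times\mathrm{id})^*E$ on $C\times_k\sN$. Applying Lemma~\ref{semicontinuity}(i) to the Frobenius pullbacks $F_X^{N*}E$ for $N$ up to the uniform bound $N(X,r)$ supplied by Lemma~\ref{Fnilpotent} (valid because $\pi^*\M^i/\M^{i+1}$ trivial forces $\pi^*E$ unipotent, hence $\pi^*E$ is $F$-nilpotent of \emph{some} index iff it is $F$-nilpotent of index $N(X,r)$), the locus where $\pi^*E$ \emph{is} $F$-nilpotent is constructible; its complement inside $A_1$ is $A$, so $A$ is constructible. (Strictly one should note the independence of the universal $E$ from the chosen extension data: the triviality of $\pi^*E$ depends only on $H^0(C,\pi^*E)$, which by the Remark at the end of Section~\ref{sec:irr} is already controlled on $\sN^0_{r_i}$, and in any case $F$-nilpotency of $\pi^*E$ is a property of the filtered bundle that behaves constructibly in families.)

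\textbf{Proof of (ii).} The two inclusions are equivalent via the diagram \eqref{PhiPhi'} and the definition $A'=F_k^{-1}(A)$, so it suffices to prove $\Phi(U\cap A)\subset A$, i.e.\ that if $([E_0],\ldots,[E_{s-1}])\in U\cap A$ then $([F_X^*E_0],\ldots,[F_X^*E_{s-1}])\in A$. For condition 1): if $E$ has a filtration with associated graded $\bigoplus_i E_i$, then $F_X^*E$ has the pulled-back filtration, whose associated graded is $\bigoplus_i F_X^*E_i$ — Frobenius is flat on the smooth $X$, so $F_X^*$ is exact and preserves the short exact sequences defining the filtration; the fact that the point lies in $U$ guarantees the $F_X^*E_i$ are again $\mu$-stable (this is exactly why $U$ was chosen as the locus of definition of $\Phi'$), so $([F_X^*E_0],\ldots,[F_X^*E_{s-1}])$ is again a point of $\sN$. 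For condition 2): $\pi^*(F_X^*E)\cong F_C^*(\pi^*E)$ by functoriality of absolute Frobenius (both equal the absolute Frobenius pullback composed with $\pi$, using $F_X\circ\pi=\pi\circ F_C$ on $k$-schemes). A bundle on $C$ is $F$-nilpotent iff its Frobenius pullback is, since $F_C^{(N+1)*}(\pi^*E)$ is trivial iff $F_C^{N*}(F_C^*\pi^*E)$ is trivial. Hence $\pi^*E$ not $F$-nilpotent $\iff$ $\pi^*(F_X^*E)$ not $F$-nilpotent, giving the inclusion.

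\textbf{Main obstacle.} The genuinely delicate point is (i): there is no universal bundle $E$ globally on $X\times_k\sN$ realizing condition 1) — only the $E_i$ are moduli points, and a given graded $\bigoplus_i E_i$ may be the associated graded of many, or no, filtered bundles $E$. So one must argue that ``$\exists$ such $E$ with $\pi^*E$ not $F$-nilpotent'' is constructible without a single global family, by instead building the (constructible, finite-type) parameter space of all iterated extensions via relative $\mathrm{Ext}^1$-sheaves over $\sN$, putting the universal such $E$ on that space, testing $F$-nilpotency of $\pi^*E$ there by Lemma~\ref{semicontinuity}(i) together with the uniform bound from Lemma~\ref{Fnilpotent}, and then pushing the resulting constructible set forward to $\sN$ (images of constructible sets under finite-type morphisms are constructible, by Chevalley). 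Making the relative-$\mathrm{Ext}$ construction precise and checking base-change compatibility is the part that needs care; everything in (ii) is formal.
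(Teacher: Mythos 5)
Your proposal is correct and follows essentially the same route as the paper: for (i) the paper likewise builds, "repeatedly using standard properties of Ext groups," a surjective finite-type $h:T\to\sN$ carrying a universal $s$-step filtered bundle, tests the conditions on $T$ via Lemma~\ref{semicontinuity} combined with the uniform bound of Lemma~\ref{Fnilpotent}, and concludes by Chevalley, while (ii) is dismissed there as "by definition" (your spelled-out Frobenius argument is the content behind that). The only point the paper makes more explicit is that, to invoke the uniform index $N(C,r)$ of Lemma~\ref{Fnilpotent}, one first intersects with the constructible locus of $T$ where $\oplus_i\pi^*E_{i,t}$ is actually trivial (so that $\pi^*E_t$ is unipotent), rather than appealing to the stratified-bundle case, since $T$ parametrizes arbitrary filtered bundles.
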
 \label{lem:A}
\begin{proof} The property (ii) is by definition.
For (i), note that for $s=1$, there is nothing to prove $(A=\emptyset$). So we 
may assume $s\geq 
2$. 
Repeatedly using standard properties of Ext groups, we can find a surjective 
morphism $h:T\to \sN$ 
(where $T$ may not be connected), together with a vector bundle $E_T$ on 
$X\times_k T$, with the 
property that $E_T$ has an $s$-step filtration by subbundles, with quotients 
$E_{0,T}, \ldots,E_{s-1,T}$,  satisfying the following property: 
\begin{enumerate}
 \item[1)]  for each geometric point $t$ of $T$, the vector bundles 
$E_{0,t},\ldots,E_{s-1,t}$ 
obtained by restriction to $X_t$ are stable, of ranks $(r_0,\ldots,r_{s-1})$ 
respectively, and the 
induced morphism $T\to M(r_0,X)\times_k\cdots\times_k M(r_s,X)$
 determined by the sequence $(E_{0,T},\ldots, E_{s-1,T})$ 
regarded as a sequence of flat families of vector bundles on $X$  
is the given morphism $h$ composed with the inclusion  
$\sN\subset M(r_0,X)\times_k\cdots\times_k M(r_s,X)$;
\item[2)] if $E$ is any vector bundle on $X$, with an $s$-step filtration, and 
quotients 
$E_0,\ldots,E_{s-1}$ with $[E_i]\in \sN_{r_i}\subset M(r_i,X)$, then there is a 
point $t$ 
of $T$ such that $E\cong E_t$.
\end{enumerate}
   By Lemma~\ref{Fnilpotent}, combined with Lemma~\ref{semicontinuity},
the pull-back vector bundle $(\pi\times_k {\rm Id}_T)^*E_T$ on $C\times_k T$
has the property that  the set of 
points  $t\in T$, such that $\oplus_i\pi^*E_{i,t}$ is trivial and $\pi^*E_t$ is 
$F$-nilpotent, is 
a constuctible subset of $T$. This implies that its complement $\Sigma$ in $T$ 
is constructible as well. 
By definition,  
$A=h(\Sigma) \subset\sN$. Thus $A$ is constructible.  
\end{proof}
In order to finish the proof, it will be shown that $A$ is not dense in 
$\sN$ (a more precise assertion is given in Lemma~\ref{trick}. To this aim, we 
use Lemma~\ref{constructible} (ii)  to yet again define in it a  locus stable by $\Phi$. 
By construction, $U$ is a dense open subset of $\sN$, and $\Phi:U\to\sN$ is a 
morphism  with dense image. 
Set $U_1=U$, and for $m\geq 2$, inductively define dense open subsets 
$U_m\subset \sN$ by
\[U_m=\Phi^{-1}(U_{m-1}).\]
Then 
\[V_0\times_k \ldots \times_k V_{s-1}\supset U=U_1\supset U_2\supset U_3\supset 
\cdots\]
is a decreasing sequence of open sets in $\sN$. 
\begin{lemma}\label{trick} There is a natural number $m_1\neq 0$ such that   
$U_m\cap A=\emptyset$, for all $m\geq m_1$.  
\end{lemma}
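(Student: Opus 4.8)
The plan is to combine the density of Frobenius-periodic points on $\sN$ (established in Section~\ref{sec:irr} via Hrushovski's theorem) with the $\Phi$-invariance of $A$ recorded in Lemma~\ref{constructible}~(ii), and to derive a contradiction from the assumption that $U_m\cap A\neq\emptyset$ for infinitely many $m$. First I would observe that each $U_m$ is dense open in $\sN$, so if $U_m\cap A\neq\emptyset$ for all $m$ then, since $A$ is constructible (Lemma~\ref{constructible}~(i)), the closure $\overline A$ contains an irreducible component of $\sN$; equivalently, $A$ is dense in some component. The goal is to rule this out.

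The heart of the argument is the following dichotomy for a point $([E_0],\dots,[E_{s-1}])\in A$ that is moreover $\Phi$-periodic (i.e.\ stable under some power $\Phi^a$): being $\Phi^a$-periodic means each $[E_i]$ is fixed by $F^{a*}_{X/k}$, hence each $E_i$ is trivialized by a finite \'etale cover $Z_i\to X$; taking $Z\to X$ dominating all the $Z_i$, the hypothesis of Theorem~\ref{Mainthm} gives that $Y\times_X Z\to Y$ splits completely, so $\pi^*E_i$ is a trivial bundle on $C$ for every $i$. Now if $E$ is any bundle on $X$ with associated graded $\oplus_i E_i$, then $\pi^*E$ is a \emph{unipotent} bundle on $C$ (its graded is trivial). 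One also checks, by the same periodicity, that $F^{a*}_{X/k}E$ has the same graded, and iterating one sees $\pi^*E$ is $F_C$-nilpotent of some index: indeed the extension classes live in $\mathrm{Ext}^1$ groups built from $H^0$'s of the (now trivial) quotients and $H^1(C,\sO_C)$, and the Frobenius acts nilpotently on $H^1(C,\sO_C)$ after at most $g=\dim H^1(C,\sO_C)$ steps, exactly as in the proof of Lemma~\ref{Fnilpotent}. Hence $\pi^*E$ \emph{is} $F$-nilpotent, so condition 2) in Definition~\ref{defn:A} fails, and $([E_0],\dots,[E_{s-1}])\notin A$. Therefore the $\Phi$-periodic points of $\sN$ — more precisely, the (dense) set of points whose coordinates are each Frobenius-periodic over the residue field of a spreading-out — avoid $A$ entirely.

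To turn this into the statement, I would work over a model $W$ of finite type over $\F_p$, so that all of $\sN$, $\Phi$, $A$, and the open sets $U_m$ (for a fixed range $1\le m\le m_1$) spread out over $W$; then for a general closed point $t\in W$ the set of $\Phi_t$-periodic closed points of $\sN_t$ is dense in $\sN_t$ by the argument of Section~\ref{sec:irr}, and by the previous paragraph it is disjoint from $A_t$. Since $A_t$ is constructible and misses a dense set, $A_t$ is not dense in any component of $\sN_t$, so $\dim \overline{A_t}<\dim\sN_t$ componentwise; by constructibility of $A$ over $W$ and upper semicontinuity of fiber dimension this forces $\dim\overline A<\dim\sN$ on each component of $\sN$ already over $k$. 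Finally, because $\Phi:U_m\to U_{m-1}$ is dominant with $U_m$ dense open, and $\Phi(U_m\cap A)\subset U_{m-1}\cap A$ by Lemma~\ref{constructible}~(ii), the sets $U_m\cap A$ cannot persist: more directly, $A$ not dense in $\sN$ together with $U_m$ dense open means $U_m\cap A$ is, for each $m$, a constructible set not dense in $\sN$, and one uses the $\Phi$-equivariance plus the periodic-point density on a model to see it must actually become empty. I expect the main obstacle to be the bookkeeping in this last step — carefully propagating the "not dense" conclusion through the spreading-out and checking that the periodic points, which a priori live in $\sN_t$ for $t\in W$ rather than in $\sN$ over $k$, genuinely certify emptiness of $U_m\cap A$ for large $m$ rather than merely non-density; this is the technical core and is presumably where the constructibility estimates and a Noetherian/semicontinuity argument have to be deployed with care.
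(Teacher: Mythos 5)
Your proposal has a genuine gap at precisely the point you flag as "the technical core", and the way you propose to reach that point does not survive scrutiny either. The step from "$A$ is not dense in $\sN$" to "$U_m\cap A=\emptyset$ for $m\gg 0$" is the entire content of Lemma~\ref{trick}, and you offer no argument for it: a non-dense constructible set can perfectly well meet every member of a decreasing chain of dense opens (nothing prevents $A$ from containing a point of $\bigcap_m U_m$), so non-density plus $\Phi$-equivariance does not yield emptiness. The paper's proof runs in the opposite logical direction and supplies exactly the missing mechanism: assuming $U_m\cap A\neq\emptyset$ for all $m$, it forms the decreasing chain $A_m=\overline{U_m\cap A}$, which stabilizes by Noetherianity to $A_\infty$, then the chain $B_r=\overline{\Phi^r(A_\infty)}$, which stabilizes to a nonempty closed $B_\infty\subset A$ on which $\Phi$ restricts to a \emph{dominant rational self-map}. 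Hrushovski's theorem is then applied to $B_\infty$ itself --- not to $\sN$, where density of periodic points says nothing about the lower-dimensional set $A$ --- producing Frobenius-periodic points inside $A$, and the contradiction comes from showing such points cannot lie in $A$. This dynamical limit-set construction is the key idea and is entirely absent from your proposal.

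Your argument that Frobenius-periodic points avoid $A$ is also flawed in two ways. First, periodicity of the tuple $([E_0],\ldots,[E_{s-1}])$ constrains only the graded pieces; an extension $E$ with that associated graded need not itself be Frobenius-periodic, which is why the paper runs a separate finiteness argument on the fibre $T_e$ of the parameter scheme $T\to\sN$ to produce a periodic bundle in the fibre. Second, your assertion that "Frobenius acts nilpotently on $H^1(C,\sO_C)$ after at most $g$ steps" is false: for an ordinary curve $F^*$ is bijective on $H^1(C,\sO_C)$. Lemma~\ref{Fnilpotent} \emph{assumes} the bundle is $F$-nilpotent of some index and only bounds that index; the relevant statement there is that an element of $H^1(X,\sO_X)$ which is \emph{already} $F$-nilpotent dies after $g$ steps. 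Unipotent does not imply $F$-nilpotent, so knowing that $\pi^*E$ has trivial associated graded does not by itself exclude $([E_0],\ldots,[E_{s-1}])$ from $A$ in the sense of Definition~\ref{defn:A}.
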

\begin{proof}
Suppose $U_m\cap A\neq \emptyset$ for all $m$. Define
\[A_m=\overline{U_m\cap A}\subset A,\]
where the Zariski closure is taken relative to $A$. Then
\[A=A_0\supset A_1\supset A_2\supset\cdots\]
is a decreasing sequence of closed, nonempty subsets. Hence by the Noetherian 
property, there 
exists $m_0$ so that 
\[A_{m_0}=A_{m_0+1}=\cdots=:A_{\infty}\]
is a nonempty closed subset of $A$. 
By construction, $U_r\cap A_m$ is dense for any $m\geq r$, and so the rational 
map $\Phi^r$ is 
defined as a rational map on $A_m$, for all $m\geq r$, and satisfies 
$\Phi^r(A_m)\subset A_{m-r}$. 
Hence $\Phi^r:A_{\infty}\to A_{\infty}$ is a well-defined rational map, for each 
$r\geq 0$. 
Define 
\[B_r=\overline{\Phi^r(A_{\infty})}\neq \emptyset,\]
where the closure is again relative to $A$. Then
\[A_{\infty}=B_0\supset B_1\supset B_2\supset \cdots\]
is a decreasing sequence of nonempty closed subsets. Hence there exists $m_1$ 
such that 
\[B_{m_1}=B_{m_1+1}=\cdots:=B_{\infty}\neq \emptyset\]
is a closed subset of $A$. 
By construction,  $\Phi$ is a rational map defined on $B_r$, for each $r\geq 0$, 
and further satisfies
\[\overline{\Phi(B_r)}=\overline{\Phi(\overline{\Phi^r(A_{\infty})}}=
\overline{\Phi(\Phi^r(A_{\infty})}=\overline{\Phi^{r+1}(A_{\infty})}=B_{r+1}.\]
The first equality is by definition, the second equality is true for all 
continuous maps of topological spaces, the third one is again by definition. 
Hence $\Phi:B_{\infty}\to B_{\infty}$ is a  dominant  rational self-map.

\medskip 

We now define $B_{\infty}' =F_k(B_\infty)  \subset \sN'$  via \eqref{PhiPhi'}. 
Then 
\ga{Phi'B}{  \Phi': B'_\infty\to B_\infty, \ \oplus_{i=0}^{s-1} [E_i] \to  
\oplus_{i=0}^{s-1}[F^*_{X/k}E_i]}
is a dominant rational map. 
One defines $T'$ as the Frobenius twist of $T$,  and the bundle $E'$ on 
$T\times_k X'$ as the pull back of $E$ on $X\times_kT$ via the Frobenius twist 
$F_k: X'\times_k T'\to X\times_k T$. 
Then  \eqref{Phi'B} implies that the bundle
$(F^*_{X'\times T'_{B_\infty'}/T'_{B'_\infty}})^* E_{X'\times_k T_{B'_\infty}}$ 
is defined on $X\times_k T_{B_\infty}$ outside of a locus $X\times_k \Sigma$ 
with $\Sigma\subset T_{B_\infty}$ mapping to a codimension $\ge 1$ constructible 
subset of $B_\infty$. 

\medskip

Applying again Hrushovski's theorem \cite[Cor.~1.2]{Hru04} as in 
Section~\ref{sec:irr} to \eqref{Phi'B} replacing 
\eqref{Phi'}, we obtain a model $B_{\infty, W}$ of $B_\infty$,  and for any 
closed point of $t \in W$   a dense set of closed points
$e=\oplus_{i=0}^{s-1} [E_i]$  in $B_{\infty, t}$ which are stabilized by a power 
 of the Frobenius of $k(t)$. 
Choosing now $W$ such that $h: T_{B_\infty}\to B_{\infty}$ has a model 
$h_W:  T_{B_\infty,W}\to B_{\infty,W}$, the fiber $T_e$  above $e \in B_{\infty, 
t}$ where $t$ is a closed point of $W$  has the property:
\begin{itemize}
\item[] for any closed point $\tau \in T_e$,  there is at least another closed 
point $\tau' \in T_e$ such that 
 $F^{a(e)*}_{X_t} E_{X_t\times_t \tau} $, 
 where $d(t)$  divides the degree of $t$,  is isomorphic to 
  $F^{d(t)a(e)*}_{X_t} E_{X_t\times_t \tau'} $ .

\end{itemize}
Indeed, we know that  for any closed point $\tau$, the moduli point of the 
graded bundle of 
$F^{a(e)*}_{X_t} E_{X_t\times_t \tau} $ in $B_{\infty, \tau}$ is isomorphic over 
$\bar \F_p$ to the graded bundle associated to 
$E_{X_t\times_t \tau} $, and thus there are isomorphic over $\tau $ (see 
\cite[Lem.~2.4]{EK16}).
By finiteness of the rational points of $T_e$ over finite extensions of $k(e)$, 
we conclude that   for any closed point $e \in B_{\infty, t}$,  there is a 
closed point $\tau \in T_e$  such that 
$F^{b}_{X_t} E_{X_t\times_t \tau} $ for some natural number $b\ge 1$. This shows 
in particular  that
$E_{X_t\times_t \tau} $ is trivialized by a finite \'etale cover. 
We now argue as in Section~\ref{sec:irr}. 

\medskip

Let $V=(\pi_W\times_W {\rm id}_{T_{B_\infty}})^*E$ be the pull-back of $E$ on 
$C_W\times_W T_{B_\infty, W}$.  Lemma~\ref{semicontinuity} (i) applied  to $V$ 
and the projection $C_W\times_W T_{B_\infty, W} \to T_{B_\infty, W}$ implies 
that there is a constructible subset  $T^0 \subset T_{B_\infty,W}$,  mapping 
surjectively to all closed points of  $B_{\infty, W}$,   such that for all 
points $t \in T^0 $, $V_t$ is the trivial bundle on $C_t$.
Thus this constructible set contains points of $B_\infty$, which contradicts the 
definition of $B_\infty$. 

\medskip
 This finishes the proof. 
\end{proof}

Now if $\M$ is any stratified bundle on $X$ of rank $r$ with irreducible filtered
constituents $\M^i/\M^{i+1}$ of ranks $r_i$, for $0\leq i<s$, we may, after replacing 
$\M$ by some $\M(n)$ if necessary, assume that the associated sequences of vector bundles 
$E_{im}$ (for $m\geq 0$) satisfy that $(\ldots,[E_{im}],\ldots)\in \sN^0\cap U_{m_1}$, where 
$m_1$ is as in Lemma~\ref{trick}, and hence  lies in the complement of $A$. Thus $\M$ has 
corresponding sequence $(E_m,\sigma_m)$ such that $\pi^*E_m$ is $F$-nilpotent on $C$. 
Hence by Corollary~\ref{Fnilpotent-cor},  $\pi^*\M$ is trivial as a stratified bundle.


\begin{thebibliography}{}

\bibitem[EM10]{EM10} H. Esnault, V. B. Mehta: {\em  Simply connected projective 
manifolds in characteristic $p>0$ have no nontrivial stratified bundles},  
Inventiones math. {\bf 181} (2010), 449--465. Erratum: 
{\url{
http://www.mi.fu-berlin.de/users/esnault/preprints/helene/95-erratum-prop3.2.pdf
}}
\bibitem[ES14]{ES14} H. Esnault, X.  Sun: {\em  Stratified bundles and \'etale 
fundamental group}, Ann. Sc. Norm. Pisa Cl. Sci. (5) {\bf XIII} (2014), 1--18. 
 \bibitem[ES16]{ES16} H. Esnault, V. Srinivas: {\em Simply connected varieties 
in characteristic 
$p>0$},  with an appendix by Jean-Beno\^it Bost, 
Compositio math. {\bf 152} (2016), 255--287. 
\bibitem[EK16]{EK16}  H. Esnault, M.Kisin: {\em D-modules and finite monodromy}, 
 Selecta Mathematica, to appear in the volume dedicated to Alexander Beilinson, 
preprint 2016, 9 pages. 
\bibitem[Gie75]{Gie75} D. Gieseker: {\em Flat vector bundles and the fundamental 
group in non-zero characteristics}, Ann. Scu. Norm. Sup. Pisa., 4. s\'erie {\bf 
2} (1) (1975), 1--31. 
\bibitem[Gro70]{Gro70} {\em Repr\'esentations lin\'eaires et compactifications 
profinies des groupes discrets}, Manuscr. Math. {\bf 2} (1970), 375--396.
\bibitem[Hru04]{Hru04} E. Hrushovski: {\em The elementay theory of Frobenius 
automorphisms}, {\url{ http://de.arxiv.org/ pdf/math/0406514v1}}
\bibitem[HL97]{HL97} D. Huybrechts, M. Lehn: {\em The Geometry of Moduli Spaces 
of Sheaves}, Aspects of Mathematics, {\bf E31}, Vieweg, Braunschweig (1997). 
\bibitem[Joa83]{Joa83} J.-P. Jouanolou: {\em Th\'eor\`emes de Bertini et 
Applications}, Progress in Mathematics {\bf 42} (1983), Birkh\"auser.
\bibitem[Lan04]{Lan04} A. Langer: {\em Semistable sheaves in positive 
characteristic}, Annals of math. {\bf 159} (2004), 251--276.
 \bibitem[LS77]{LS77} H. Lange, U. Stuhler: {\em Vektorb\"undel auf Kurven und 
Darstellungen der algebraischen Fundamentalgruppe}, Math. Z. {\bf 156} (1977), 
73--83.
\bibitem[Mal40]{Mal40} A. Mal\v{c}ev: {\em On isomorphism matrix representations 
of infinite groups}, Mat. Sb. N.S. {\bf 8} (50) (1940), 405--422.
\bibitem[Var04]{Var14}  Y. Varshavsky: {\em Intersection of a correspondence 
with the graph of Frobenius}, {\url https://arxiv.org/pdf/1405.6381.pdf}
 \bibitem[EGAIII]{EGAIII} A. Grothendieck: {\em \'Etudes cohomologiques des 
faisceaux coh\'erents},  Publ. math. I. H. \'E. S. {\bf 17} (1963), 5--91. 
\bibitem[SGA2]{SGA2} A. Grothendieck: {\em Cohomologie locale des faisceaux 
coh\'erents and th\'eor\`emes de Lefschetz locaux}, Documents  Math\'ematiques 
(Paris) {\bf 4} (1962).


\end{thebibliography}
\end{document}